\def\polhk#1{\setbox0=\hbox{#1}{\ooalign{\hidewidth
    \lower1.5ex\hbox{`}\hidewidth\crcr\unhbox0}}}
\newcommand{\dist}{\overset{dist}{=}}
\DeclareMathOperator{\lk}{lk}
\DeclareMathOperator{\st}{st}
\newcommand{\Z}{\mathbb Z}
\newcommand{\ER}{{Erd\H{o}s--R\'{e}nyi }}
\newcommand{\fsc}{\Delta} 
\newcommand{\fcc}{X} 
\newcommand{\vsupp}{\text{vsupp}}
\theoremstyle{plain}
\newtheorem{thm}{Theorem}[section]
\newtheorem{lem}[thm]{Lemma}
\newtheorem{define}[thm]{Definition}
\theoremstyle{remark}
\newenvironment{pfofthm}[1]
{\par\vskip2\parsep\noindent{\em Proof of Theorem\ #1. }}{{\hfill
$\square$}
\par\vskip2\parsep}
\tikzset{every tree node/.style={align=center}}
\def\path{{\mathscr P}}
\begin{document}

\title[Collapsibility of Random Clique Complexes]{Collapsibility of Random Clique Complexes}
\author{Greg Malen}
\begin{abstract}
We prove a sufficient condition for a finite clique complex to collapse to a $k$-dimensional complex, and use this to exhibit thresholds for $(k+1)$-collapsibility in a sparse random clique complex. In particular, if every strongly connected, pure $(k+1)$-dimensional subcomplex of a clique complex $\fcc$ has a vertex of degree at most $2k+1$, then $\fcc$ is $(k+1)$-collapsible. In the random model $X(n,p)$ of clique complexes of an \ER random graph $G(n,p)$, we then show that for any fixed $k\ge 0$, if $p=n^{-\alpha}$ for fixed $1/(k+1)<\alpha <1/k$, then a clique complex $\fcc\dist X(n,p)$ is $(k+1)$-collapsible with high probability. 
\end{abstract}

\maketitle

\section{Introduction}
The study of random topological spaces has become an incredibly important tool in a wide range of research. Of particular interest is the study of phase transitions for structural and topological properties of discretized spaces. These results were classicly studied for 1-dimensional random graphs, most notably in the \ER random graph $G(n,p)$, the probability space of graphs on $n$ vertices where each edge appears independently with probability $p$. Viewing graphs as simply 1-dimensional topological spaces, the search for analogous results in related high-dimensional models has become increasingly relevant to modern research.

Here we consider a natural extension of $G(n,p)$, the random model $X(n,p)$. This is the probability space of clique complexes on $n$ vertices where each edge appears independently with probability $p$. A clique complex, also known as a flag complex, is the maximal simplicial complex supported on a given set of edges, so this is equivalent to taking a graph sampled from $G(n,p)$ and adding in all simplices whose edge sets appear in full. If $\fcc$ is a complex sampled from $X(n,p)$, we write $\fcc\dist X(n,p)$, and then we say that $\fcc$ has property $A$ \emph{with high probability} if $P(\fcc \in A)\rightarrow 1$ as $n\rightarrow \infty$. Property $A$ is said to have a sharp threshold if there is some $p_A$ for which

$$\lim_{n\rightarrow\infty}P(X\in A)=\left\{\begin{array}{ll}
0 & \text{ if } p\leq(1-\epsilon)p_A\\
1 & \text{ if } p\geq(1+\epsilon)p_A\end{array}\right\}.$$
\\

Various topological features of this model, such as its homology and fundamental group, have been studied extensively in the so-called sparse regime in ~\cite{babson}, ~\cite{CFH15}, ~\cite{hkp12}, ~\cite{clique}, ~\cite{kahlesharp}, and ~\cite{Meckes1}. Our focus is on the collapsibility of $X(n,p)$. Collapses are an extremely useful tool in studying simplicial complexes, as they provide a method of reducing both the number of simplices and the overall dimension, making the computation of topological invariants far more tractable. Simplicial collapses are also homotopy equivalences, so the reduced complex has the same topological invariants as the original. We focus specifically on the property of being $k$-collapsible. A complex is said to be \emph{$k$-collapsible} if all faces of dimension at least $k$ can be collapsed, and so the complex is homotopy equivalent to a $(k-1)$-dimensional complex. 

Similar collapsibility thresholds were studied previously in another important extension of $G(n,p)$, the random $d$-dimensional model $Y_d(n,p)$. Collapsibility in this model was initially studied in ~\cite{ALLM13} and ~\cite{CCFK12}. In \cite{AL16} Aronshtam and Linial showed that the lower bound for $d$-collapsibility in ~\cite{ALLM13} is indeed tight, thus establishing a sharp threshold. 

In $X(n,p)$, we exhibit a lower bound for a $(k+1)$-collapsibility threshold. We use $(k+1)$-collapsibility, as opposed to $k$-collapsibility, in order to align this work with the property of a random clique complex being $k$-dimensional and the corresponding discussion of the Bouquet of Spheres conjecture in Section \ref{kspheres}. Our main result is the following.

\begin{thm}
\label{thm:collapse}
Fix an integer $k\geq 0$ and any $\epsilon>0$, and let $$p\leq n^{-1/(k+1)-\epsilon}.$$ 
If $\fcc \dist X(n,p)$, then with high probability $\fcc$ is $(k+1)$-collapsible.
\end{thm}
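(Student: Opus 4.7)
The plan is to invoke the sufficient condition of the abstract and reduce the theorem to a first-moment calculation. Call a subcomplex $Y \subseteq \fcc$ \emph{bad} if it is strongly connected, pure of dimension $k+1$, and every vertex of $Y$ lies in at least $2k+2$ top faces. By the sufficient condition, if $\fcc$ contains no bad subcomplex then $\fcc$ is $(k+1)$-collapsible, so it is enough to show that the expected number $N$ of bad subcomplexes of $\fcc$ satisfies $\E[N] \to 0$.

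For any bad $Y$ on $v$ vertices with $f$ top $(k+1)$-faces, counting vertex--face incidences gives $f(k+2) = \sum_u \deg(u) \ge v(2k+2)$, so $f \ge \tfrac{2(k+1)}{k+2}v \ge v$. Strong connectivity lets $Y$ be built by a \emph{building sequence}: start from a single top face and attach the remaining top faces one at a time, each sharing a codimension-one $k$-face with the subcomplex built so far. Each step introduces at most one new vertex, and a step that does introduce a new vertex $u$ contributes exactly $k+1$ new edges (those joining $u$ to the shared codimension-one face). Hence the number of edges in the $1$-skeleton of $Y$ satisfies
\[
e(Y) \;\ge\; \binom{k+2}{2} + (v-k-2)(k+1) \;=\; (k+1)v - \binom{k+2}{2}.
\]
Because $\fcc$ is the clique complex of $G \sim G(n,p)$, the event $\{Y \subseteq \fcc\}$ is precisely the event that every edge of the $1$-skeleton of $Y$ is present in $G$, and therefore has probability $p^{e(Y)}$.

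Enumerating labelled building sequences on $[n]$ --- at most $\binom{n}{k+2}$ choices for the root face, and at most $(t-1)(k+2)\cdot n$ attachment choices at the $t$-th step --- and combining with the edge bound yields a first-moment estimate of the form
\[
\E[N] \;\lesssim\; p^{-\binom{k+2}{2}} \sum_{v \ge k+2} (np^{k+1})^v \cdot T(v),
\]
where $T(v)$ bundles the attachment choices summed over admissible $f \in \bigl[\tfrac{2(k+1)v}{k+2}, \binom{v}{k+2}\bigr]$. The hypothesis $p \le n^{-1/(k+1)-\epsilon}$ gives $np^{k+1} \le n^{-(k+1)\epsilon} \to 0$, providing geometric decay in $v$; this is the engine of the argument.

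The main obstacle will be controlling $T(v)$ tightly enough that the decay is not overwhelmed, particularly for subcomplexes that are dense relative to their vertex count (large $f$), where most attachment steps introduce no new vertex and so the naive building bound overcounts. I would address this either by charging each ``no-new-vertex'' attachment an additional factor of $p^{k+1}$ (reflecting the $k+1$ edges around the attached face that must be present in $G$), or by strengthening the edge lower bound via a Kruskal--Katona-type inequality $e(Y) \gtrsim f^{2/(k+2)}$ in the dense regime. A case split --- a sparse range handled by the building enumeration and a dense range handled by the improved edge bound --- combined with the decay $(np^{k+1})^v$, should yield $\E[N] = o(1)$ and hence Theorem~\ref{thm:collapse}.
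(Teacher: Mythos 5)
Your overall strategy---reduce to the degree criterion and kill the bad subcomplexes by a first moment---is the right one, but there are two genuine gaps. The first is definitional: the sufficient condition (Theorem \ref{thm:colla}) asks for a vertex $v$ with $\deg_S(v)\le 2k+1$, where $\deg_S(v)$ is the degree in the $1$-skeleton of $S$ (the number of neighbours of $v$, i.e.\ the number of vertices of $\lk_S(v)$), not the number of top faces containing $v$. Your ``bad'' subcomplexes are those in which every vertex lies in at least $2k+2$ top faces, and these two conditions are incomparable: a strongly connected pure $(k+1)$-dimensional $S$ can have every vertex of graph degree $\ge 2k+2$ while some vertex lies in very few top faces, so the absence of your bad subcomplexes does not put $\fcc$ into the hypothesis of Theorem \ref{thm:colla}. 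If you use graph degree as you should, the handshake lemma gives $e(Y)\ge (k+1)v$ directly, which is both cleaner than your incidence count and exactly the exponent the argument needs.

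The second gap is the one you flag yourself, and it is fatal to the single-pass calculation: $T(v)$ cannot be controlled. The decay $(np^{k+1})^v\le n^{-(k+1)\epsilon v}$ contributes only about $(k+1)\epsilon\log n$ per vertex, whereas the number of candidate $1$-skeletons on $v$ labelled vertices with $\Theta(v)$ edges is already $\binom{\binom{v}{2}}{\Theta(v)}=v^{\Theta(v)}$, which overwhelms the decay once $v$ is a power of $n$; your building-sequence enumeration is worse still, since $f$ may be as large as $\binom{v}{k+2}$. Neither proposed repair works: a no-new-vertex attachment can add a top face whose entire $1$-skeleton is already present (this is exactly the flag-closure phenomenon), so there is no extra factor of $p^{k+1}$ to charge it, and a Kruskal--Katona bound on $e(Y)$ does not tame the entropy of the enumeration. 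The paper sidesteps all of this with a two-step argument (Lemma \ref{lem:coll_prob}): first, quoting Lemma 5.1 of \cite{clique}, with high probability \emph{every} strongly connected pure $(k+1)$-dimensional subcomplex spans at most $N+k+2$ vertices for a fixed $N>1/\epsilon$ --- proved by a first moment over \emph{truncated} building sequences with exactly $N$ vertex-adding attachments, where each step has only $O(n)$ choices because the partial structure has bounded size; second, a union bound over the finitely many isomorphism types of graphs on at most $N+k+2$ vertices with minimum degree $2k+2$, each contributing at most $n^{\ell}p^{(k+1)\ell}=(np^{k+1})^{\ell}\to 0$. The truncation step is where the probabilistic content lives, and it is the idea missing from your proposal.
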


Homologically speaking, this improves on an earlier result that $H_{j}(X; \Z) = 0$ for $j \ge k+1$ ~\cite{clique}. It was also shown in ~ \cite{clique} that if $p=n^{-\alpha}$ for a fixed $1/(k+1) < \alpha < 1/k$, then $H_k(\fcc;\Z)\neq 0$. Therefore $\fcc$ is not $k$-collapsible in this range, and Theorem \ref{thm:collapse} is optimal with respect to the dimension. Since there cannot be torsion in top dimensional homology, it also rules out torsion in dimension $k$. And for $1/(k+1) < \alpha < 1/k$, the dimension of $\fcc$ is either $2k$ or $2k+1$ with high probability, so this further asserts that a sparse random clique complex collapses to middle dimension. The $k=1$ case of Theorem \ref{thm:collapse} was done in earlier work by Costa, Farber and Horak, appearing as Theorem A in \cite{CFH15}. 
\\

\section{Background and Definitions}

We first recall a few standard definitions and establish some notation. A simplicial complex $\fsc$ is a \emph{clique complex} if it is the maximal simplicial complex supported on its edge set, where every complete subgraph in its 1-skeleton is filled in with a simplex. Clique complexes are thus entirely determined by their edge sets, and they are general in the sense that every simplicial complex is homeomorphic to a clique complex, via barycentric subdivision. 

The \emph{faces} of a simplicial complex $\fsc$ are merely the simplices it contains, and for a face $\sigma\in\fsc$ with $\dim(\sigma)=k$, we call $\sigma$ a $k$-face. For a vertex $v \in \fsc$, we define the \emph{star of $v$ in $\fsc$} to be the set of faces which contain $v$, $$\text{st}_{\fsc} (v) : =\{ \tau \in \fsc \colon v\in\tau\}.$$\\ Similarly, the \emph{link of $v$ in $\fsc$} is the simplicial complex made up of the faces of $\fsc$ to which which $v$ can be added, $$\lk_{\fsc}(v)=\{\sigma\in\fsc : \sigma\cup\{v\}\in\fsc\}.$$\\
These two definitions generalize directly to the star and link of any face of $\fsc$, but will primarily be used herein with respect to vertices. It is straightforward that if $\fcc$ is a clique complex, then $\lk_{\fcc}(v)$ is also a clique complex for any vertex $v\in\fcc$. 

A $k$-dimensional simplicial complex $\fsc$ is said to be \emph{pure} if all of the maximal faces of $\fsc$ are $k$-dimensional. A pure $k$-dimensional simplicial complex $\fsc$ is then \emph{strongly connected} if for every pair of $k$-faces $\sigma$ and $\tau$ in $\fsc$, there is a sequence of $k$-faces in $\fsc$, $\sigma=\sigma_0,\sigma_1,\ldots,\sigma_j=\tau$, such that $\sigma_i \cap \sigma_{i+1}$ is a $(k-1)$-face for every $i$. These conditions are natural to study, as a minimal representative of a $k$-dimensional homology class of a simplicial complex is precisely a closed, pure $k$-dimensional, strongly connected subcomplex.

Let $\fsc$ be a simplicial complex with $\sigma,\tau\in \fsc$ such that $\tau$ is maximal in $\fsc$, $\sigma\subset\tau$, and $\sigma$ is not contained in any other maximal faces. Then $\sigma$ is called a \emph{free face} of $\tau$, or more generally of $\fsc$, and a \emph{simplicial collapse of the interval $[\sigma,\tau]$ in $\fsc$} is the removal of all faces $\eta\in \fsc$ such that $\sigma\subseteq\eta\subseteq\tau$. It is well known that simplicial collapses are homotopy equivalences.

Finally, for a subcomplex $S \subset \fsc$ we use $\vsupp(S)$ to denote the \emph{vertex support} of $S$, i.e.\ the set of vertices contained in faces of $S$. And we use the standard notation $\fsc^i$ for the $i$-skeleton of $\fsc$, i.e. $\fsc^i=\{\eta\in\fsc : \dim(\eta)\le i\}$. \\

\section{Proof of Theorem \ref{thm:collapse}}\label{tcl}

The proof of Theorem \ref{thm:collapse} will follow from Theorem \ref{thm:colla} and Lemma \ref{lem:coll_prob}, which respectively provide a sufficient condition for $(k+1)$-collapsibility of a deterministic clique complex, and show that for $p$ in the given range, this condition is satisfied with high probability.

\begin{thm} \label{thm:colla}
Fix $k\geq0$. Let $\fcc$ be a finite clique complex such that every strongly connected, pure $(k+1)$-dimensional subcomplex $S \subseteq \fcc$ contains at least one vertex $v$ with $\deg_S(v) \le 2k+1$. Then $\fcc$ is $(k+1)$-collapsible.
\end{thm}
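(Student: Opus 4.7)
The plan is to prove Theorem~\ref{thm:colla} by strong induction on the number of $(k+1)$-dimensional faces of $\fcc$. The base case, with zero $(k+1)$-faces, gives $\dim\fcc\le k$ (any higher-dimensional face would contain $(k+1)$-subfaces), so $\fcc$ is trivially $(k+1)$-collapsible. For the inductive step I would produce a single elementary collapse $(\sigma,\tau)$ with $\tau$ a $(k+1)$-face of $\fcc$ and $\sigma\subset\tau$ a free $k$-face, then apply the inductive hypothesis to the smaller complex $\fcc'=\fcc\setminus\{\sigma,\tau\}$. Even though $\fcc'$ need not itself be a clique complex, the hypothesis transfers: every strongly connected pure $(k+1)$-dimensional subcomplex of $\fcc'$ is also such a subcomplex of the original $\fcc$, and so still contains a vertex of degree at most $2k+1$.

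To produce the free $k$-face, let $Y$ be the pure $(k+1)$-dimensional subcomplex of $\fcc$ generated by its $(k+1)$-faces and let $S$ be any strongly connected component of $Y$. The hypothesis supplies a vertex $v\in S$ with $\deg_S(v)\le 2k+1$, and $\lk_S(v)$ is then a pure $k$-dimensional complex on at most $2k+1$ vertices. The heart of the argument is the claim that some $(k-1)$-face $\eta$ of $\lk_S(v)$ lies in exactly one $k$-face of $\lk_S(v)$. Granting this, the corresponding $k$-face $\{v\}\cup\eta$ of $S$ lies in exactly one $(k+1)$-face of $S$; moreover any $(k+1)$-face of $\fcc$ containing $\{v\}\cup\eta$ is strongly connected to this one via $\{v\}\cup\eta$ and therefore lies in $S$, so $\{v\}\cup\eta$ lies in exactly one $(k+1)$-face of $\fcc$ as well and (when $\dim\fcc\le k+1$) is the desired free $k$-face.

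The claim is proved by a pigeonhole argument exploiting the clique structure of $\fcc$. Suppose for contradiction that every $(k-1)$-face of $\lk_S(v)$ lies in at least two $k$-faces, and fix any $k$-face $\tau=\{u_0,\dots,u_k\}$ of $\lk_S(v)$. For each $i$, pick $w_i\ne u_i$ so that $(\tau\setminus\{u_i\})\cup\{w_i\}$ is a second $k$-face of $\lk_S(v)$; necessarily $w_i\notin\tau$. If $w_i=w_j=w$ for some $i\ne j$, then $w$ is adjacent in $\fcc$ to every element of $\tau\setminus\{u_i\}$ and of $\tau\setminus\{u_j\}$, hence to all of $\tau$. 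Since $v$ is adjacent to $\tau\cup\{w\}$ and $\fcc$ is a clique complex, $\{v\}\cup\tau\cup\{w\}$ is then a $(k+3)$-clique of $\fcc$, i.e., a $(k+2)$-face, which is impossible when $\dim\fcc\le k+1$. Thus the $w_i$ are pairwise distinct, producing $k+1$ new vertices outside $\tau$ and giving $|V(\lk_S(v))|\ge 2(k+1)=2k+2$, contradicting $\deg_S(v)\le 2k+1$.

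The main obstacle I anticipate is the case $\dim\fcc>k+1$: the hypothesis forces $\dim\fcc\le 2k+1$ (the $(k+1)$-skeleton of a $(2k+3)$-clique would violate the degree bound on every vertex), but $(k+2)$-dimensional or larger faces can genuinely occur when $k\ge 1$, and in that situation the $(k+3)$-clique produced above no longer yields a contradiction, while the putative free $k$-face $\{v\}\cup\eta$ may be engulfed by a $(k+2)$-face of $\fcc$ and fail to be free. To handle this I would carry out a preliminary reduction: working from the top dimension downward, collapse faces of each dimension $d>k+1$ first via an analogous pigeonhole argument applied to suitable pure $(k+1)$-dimensional subcomplexes built from the top-dimensional cliques, so that by the time the main induction on $(k+1)$-faces begins we may assume $\dim\fcc\le k+1$ and the argument above applies verbatim.
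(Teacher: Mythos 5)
Your pigeonhole argument is a nice and genuinely more elementary route to a free face than the paper's, and in the regime $\dim\fcc\le k+1$ it is essentially correct: purity of $\lk_S(v)$ guarantees every $(k-1)$-face sits in some $k$-face, your counting shows one of them sits in exactly one, strong connectivity shows the lifted $k$-face $\{v\}\cup\eta$ sits in exactly one $(k+1)$-face of all of $\fcc$, and the clique-complex property need only be invoked for the ambient $\fcc$ (whose $1$-skeleton contains that of every intermediate $\fcc'$), so the fact that $\fcc'$ is no longer a clique complex is a repairable technicality.

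The genuine gap is the case $\dim\fcc>k+1$, which you flag but do not resolve, and which is not a corner case: the hypothesis is satisfied by the $(k+1)$-skeleton of a $(2k+2)$-clique (every vertex there has degree exactly $2k+1$), so for every $k\ge1$ the complex may contain faces of any dimension up to $2k+1$. In that situation both halves of your argument break, exactly as you note: the contradiction in the pigeonhole step (``a $(k+2)$-face is impossible'') is no longer a contradiction, and the candidate free face $\{v\}\cup\eta$ may lie inside a higher-dimensional face and fail to be free. The proposed fix --- a preliminary top-down sweep collapsing each dimension $d>k+1$ ``via an analogous pigeonhole argument'' --- is not an argument: to collapse a $d$-face you need a free $(d-1)$-face, and the degree hypothesis, which speaks only about $(k+1)$-dimensional subcomplexes, gives no direct handle on those. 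This missing case is precisely where the paper's machinery lives: it works inside the flag closure of each maximal strongly connected piece, shows that the link of a low-degree vertex is a clique complex on at most $2k+1$ vertices which is either a cone (when the link is a full simplex, the large-clique scenario) or satisfies the hypothesis one dimension down, and then lifts a full $k$-collapse of the link to collapses in $\fcc$ that clear out \emph{all} faces of $\st_{\overline{S}}(v)$ of dimension $\ge k+1$ at once, via a double induction on $k$ and on the vertex support. Until you supply a mechanism of comparable strength for the faces of dimension $k+2$ through $2k+1$, the proof is incomplete for every $k\ge1$.
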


For convenience, for any $k\ge 0$ we refer to subcomplexes which are both pure $k$-dimensional and strongly connected as the \emph{relevant $k$-subcomplexes} of $\fcc$. Note that if $\sigma \in \fcc$ is a face with $\dim(\sigma)\geq k+1$, then $\sigma^{k+1}$ is a relevant $(k+1)$-subcomplex of $\fcc$. So each face of dimension at least $k+1$ is supported on precisely one \emph{maximal} relevant $(k+1)$-subcomplex. 

To prove Theorem \ref{thm:colla}, it is then convenient to perform the collapses independently on each maximal relevant $(k+1)$-subcomplex. First we employ a closure operation to include all high-dimensional faces which need to be collapsed, and then we examine the intersections of two distinct such subcomplexes.

\begin{define} \label{flag:cl}
For a simplicial complex $\fsc$, define the flag closure $\overline{\fsc}=\fsc\cup\{\sigma:\sigma^{1}\subseteq\fsc\}$. This is equivalent to taking the clique complex of $\fsc^1$.
\end{define}

This closure preserves the 1-skeleton and vertex degrees of $\fsc$, so $\deg_{\fsc}(v)=\deg_{\overline{\fsc}}(v)$ for every $v\in\fsc$. Furthermore, if $\fcc$ is a clique complex with $\{S_1, \ldots, S_m\}$ the set of maximal relevant $(k+1)$-subcomplexes of $\fcc$, then the set $\left\{\overline{S_1},\ldots, \overline{S_m}\right\}$ forms a partition of the faces in $\fcc$ of dimension at least $k+1$.

A useful property of simplicial collapses is that the collapse of any interval $[\sigma, \tau]$ can be factored into a set of \emph{elementary} collapses of the form $[\eta_1,\eta_2]$ with $\dim(\eta_2)=\dim(\eta_1)+1$. Thus we can remove faces of dimension at least $k+1$ by only collapsing faces of dimension at least $k$. In particular, we must be sure that $k$-faces in the intersection of two maximal relevant subcomplexes do not interfere with performing collapses in each of the subcomplexes independently.

Before taking the closures, pure dimensionality forces any $k$-face in such an intersection to be a connecting bridge, so for $S_1$ and $S_2$ both maximal, $\dim\left(S_1\cap S_2\right)\le k-1$. In the closure, however, there can be arrangements where $k$-faces which are maximal in $\fcc$ are included in $\overline{S_i}$. Lemma \ref{int:dim} ensures that any such $k$-face is only contained in higher dimensional faces in one of the $\overline{S_i}$, so it can be collapsed there independently of collapses being performed in any distinct relevant subcomplexes. 

\begin{lem} \label{int:dim}
Let $\fcc$ be a clique complex with $S_1,S_2\subset \fcc$ distinct, maximal, relevant $(k+1)$-subcomplexes. Then $\dim\left(\overline{S_1}\cap \overline{S_2}\right)\le k$, and if $\sigma$ is a $k$-dimensional face in the intersection, then $\sigma$ is a maximal face in at least one of $\overline{S_1}$ and $\overline{S_2}$.
\end{lem}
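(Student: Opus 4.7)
The plan is to prove both assertions by contradiction, handling the dimension bound first. Suppose $\sigma$ is a $(k+1)$-face in $\overline{S_1}\cap\overline{S_2}$; by the definition of the flag closure, $\sigma^1\subseteq S_1\cap S_2$, so every edge of $\sigma$ lies in both $S_1$ and $S_2$. My target is to show that $\sigma$ itself must lie in $S_1\cap S_2$, which produces a $(k+1)$-face of the intersection and contradicts the bound $\dim(S_1\cap S_2)\le k-1$ obtained by the bridge argument in the paragraph preceding the lemma.

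To show $\sigma\in S_i$, the plan is to exhibit a $(k+1)$-face $\tau\in S_i$ with $\tau\ne\sigma$ and $\tau\cap\sigma$ a $k$-subface $\rho$ of $\sigma$. Once $\tau$ and $\rho$ are found, the complex $S_i\cup\{\sigma\}$ is pure $(k+1)$-dimensional and strongly connected (bridging $\sigma$ to $\tau$ via the $k$-face $\rho$), and the maximality of $S_i$ forces $\sigma\in S_i$. To produce $\tau$: by purity every edge of $\sigma$ sits in some $(k+1)$-face of $S_i$, and I would select a $(k+1)$-face $\tau\in S_i$ that maximizes $|\tau\cap\sigma|$ among those containing an edge of $\sigma$, then use strong connectivity of $S_i$, maximality, and the clique-complex structure of $\fcc$ to drive this maximum up to $k+1$ vertices.

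For the maximal-face statement, let $\rho$ be a $k$-face in the intersection that is non-maximal in both closures. Then there are $(k+1)$-faces $\tau_i=\rho\cup\{v_i\}\in\overline{S_i}$ for $i=1,2$. If $v_1=v_2$, then $\tau_1=\tau_2$ is a $(k+1)$-face in $\overline{S_1}\cap\overline{S_2}$, directly contradicting the first part. When $v_1\ne v_2$, examining the set $\rho\cup\{v_1,v_2\}$ and its adjacency structure in $\fcc^1$, together with the adjacency of $v_1,v_2$ to $\rho$'s vertices in both $S_1^1$ and $S_2^1$, should again reduce to the first part (by exhibiting a common $(k+1)$-face in the intersection) or to a direct connectivity/maximality contradiction.

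The main obstacle is the extremal claim in the second paragraph that some $k$-subface of $\sigma$ actually lies in $S_i$. For $k=1$ this is immediate: every edge of $\sigma$ is itself a $k$-face and is contained in a triangle of $S_i$ by purity, so the shared edge already provides the bridge. For $k\ge 2$, a $k$-subface of $\sigma$ is a priori only a clique in $S_i^1$, not a face of $S_i$, and extracting a genuine $k$-face of $S_i$ contained in $\sigma$ is where the argument will demand the most care, requiring a delicate interplay between the purity, strong connectivity, and maximality of $S_i$ with the rigidity of the clique-complex structure of $\fcc$.
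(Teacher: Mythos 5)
Your reduction of the first assertion to the claim that a $(k+1)$-face $\sigma$ with $\sigma^1\subseteq S_i$ must already lie in $S_i$ is exactly the pivot of the paper's own proof, and your treatment of the second assertion via the faces $\tau_i=\rho\cup\{v_i\}$ has the right shape. The problem is that the step you yourself flag as ``the main obstacle'' is a genuine gap, and the engine you propose for closing it does not run. You want to take a $(k+1)$-face $\tau\in S_i$ maximizing $|\tau\cap\sigma|$ and ``drive this maximum up to $k+1$,'' but purity only guarantees that each \emph{edge} of $\sigma$ sits in some $(k+1)$-face of $S_i$, i.e.\ $|\tau\cap\sigma|\ge 2$, and neither strong connectivity (a condition internal to $S_i$, saying nothing about how its top faces meet the external clique $\sigma$) nor maximality (which only bites once a common $k$-face is already in hand) nor the clique property of $\fcc$ provides any way to increase the overlap. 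Concretely, for $k=2$ one can cover the six edges of a $4$-clique $\{a,b,c,d\}$ by six tetrahedra $\{u,v,p_{uv},q_{uv}\}$ with fresh vertices $p_{uv},q_{uv}$ adjacent only to $u,v$ and each other, and string these into a single strongly connected pure $3$-dimensional complex using connector tetrahedra whose new vertices each meet $\{a,b,c,d\}$ in at most one point; then no vertex outside $\{a,b,c,d\}$ is adjacent to three of $a,b,c,d$, every $(k+1)$-face of the resulting maximal $S_1$ meets $\sigma=\{a,b,c,d\}$ in at most two vertices, and your maximization stalls at $2$ while $\sigma$ lies in a different maximal relevant subcomplex. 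So the claim you are aiming for is not merely delicate for $k\ge 2$ --- on this configuration it fails, which also puts pressure on the lemma's first conclusion itself. (For what it is worth, the paper disposes of the same step with the one-line assertion that ``by maximality all $(k+1)$-faces are included in some $S_i$ before applying the closure,'' which is airtight for $k\le 1$, where a shared edge is already a shared $k$-face, but for $k\ge 2$ it needs exactly the argument you could not supply.)

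A secondary gap: in the $v_1\ne v_2$ case of the maximal-face statement you defer to ``examining the adjacency structure'' of $\rho\cup\{v_1,v_2\}$, which is not an argument. The intended route is: once one knows $\tau_1\in S_1$ and $\tau_2\in S_2$ (the same unestablished claim again), the two distinct $(k+1)$-faces $\tau_1,\tau_2$ share the $k$-face $\rho$, so $S_1\cup S_2$ is pure, strongly connected, and strictly larger than each $S_i$, contradicting maximality; no analysis of the mutual adjacency of $v_1$ and $v_2$ is needed or relevant. As written, both halves of your proposal rest on the single unproven claim, so the proposal does not yet constitute a proof.
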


\begin{proof} [Proof of Lemma \ref{int:dim}]
Suppose $\eta$ is a $(k+1)$-face in $\overline{S_1}\cap \overline{S_2}$. Then also $\eta\in S_1\cap S_2$, since by maximality all $(k+1)$-faces are included in some $S_i$ before applying the closure. But then $\eta$ forms a bridge between $S_1$ and $S_2$, and $S_1\cup S_2$ is strongly connected by transitivity, contradicting maximality. Hence $\dim\left(\overline{S_1}\cap \overline{S_2}\right)\le k$.

Now suppose that $\sigma$ is a $k$-face in the intersection which is not a maximal face in either $\overline{S_1}$ or $\overline{S_2}$, and let $\eta_1\in \overline{S_1}$ and $\eta_2\in \overline{S_2}$ be $(k+1)$-faces containing $\sigma$. As in the above, we have that in fact $\eta_1\in S_1$ and $\eta_2\in S_2$, and $\eta_1\neq\eta_2$. But then $\sigma\in S_1\cap S_2$, and it forms a bridge between $\eta_1$ and $\eta_2$, so by transitivity $S_1 \cup S_2$ is strongly connected, again contradicting maximality. Hence $\sigma$ must be maximal in at least one of $\overline{S_1}$ and $\overline{S_2}$.
\end{proof}

So for a $k$-face $\sigma\in \fcc$, $\text{st}_{\fsc}(\sigma)$ is contained in at most one of the $\overline{S_i}$. Therefore collapses which remove faces of dimension at least $k$ can all be performed independently in the various $\overline{S_i}$.

In the induction step in the proof of Theorem \ref{thm:colla}, it will be necessary to perform collapses in the link of a vertex. Lemma \ref{liftcollapse} provides a mechanism for lifting these to collapses of the correct dimension in $\fcc$.

\begin{lem} \label{liftcollapse}
Let $\fcc$ be a clique complex, with $S \subseteq \fcc$ a maximal relevant $(k+1)$-subcomplex. For a vertex $v\in S$, if $\sigma, \tau\in \lk_{\overline{S}}(v)$ such that $\sigma$ is a free face of $\tau$ and $\dim(\sigma)\geq k-1$, then $\sigma \cup \{v\}$ is a free face of $\tau \cup \{v \}$ in $\fcc$.
\end{lem}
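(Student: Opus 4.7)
The plan is to show the single statement that every face $\mu \in \fcc$ properly containing $\sigma \cup \{v\}$ is contained in $\tau \cup \{v\}$. This conclusion yields both halves of the free face definition at once: taking $\mu \supseteq \tau \cup \{v\}$ gives maximality of $\tau \cup \{v\}$ in $\fcc$, while for any other $\mu$ it confines $\sigma \cup \{v\}$ to the single maximal face $\tau \cup \{v\}$.

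The first step is to reduce from a proper superface $\mu \supsetneq \sigma \cup \{v\}$ in $\fcc$ to membership $\mu \in \overline{S}$. Since $\dim(\sigma) \geq k-1$ we have $\dim(\sigma \cup \{v\}) \geq k$ and hence $\dim(\mu) \geq k+1$, so by the partition statement following Definition \ref{flag:cl}, $\mu$ lies in exactly one flag closure $\overline{S_j}$ of a maximal relevant $(k+1)$-subcomplex. I would argue by contradiction that $S_j = S$. Assuming $S_j \neq S$, both $\sigma \cup \{v\} \in \overline{S}$ (because $\sigma \in \lk_{\overline{S}}(v)$) and $\sigma \cup \{v\} \in \overline{S_j}$ (because $\overline{S_j}$ is closed under subfaces) hold, placing $\sigma \cup \{v\}$ in $\overline{S} \cap \overline{S_j}$. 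Lemma \ref{int:dim} then forces $\dim(\sigma \cup \{v\}) = k$ and requires $\sigma \cup \{v\}$ to be maximal in one of $\overline{S}, \overline{S_j}$. But $\tau \cup \{v\} \in \overline{S}$ properly contains $\sigma \cup \{v\}$ (using $\sigma \subsetneq \tau$) and $\mu \in \overline{S_j}$ properly contains it by assumption, contradicting that conclusion. Therefore $\mu \in \overline{S}$.

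Once $\mu \in \overline{S}$, the rest is mechanical. From $v \in \mu \in \overline{S}$ we get $\mu \setminus \{v\} \in \lk_{\overline{S}}(v)$, and $\mu \setminus \{v\} \supseteq \sigma$. The hypothesis that $\sigma$ is a free face of $\tau$ in $\lk_{\overline{S}}(v)$ means every face of the link containing $\sigma$ is contained in the unique maximal face $\tau$, hence $\mu \setminus \{v\} \subseteq \tau$, and $\mu \subseteq \tau \cup \{v\}$, as required.

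The main obstacle is the reduction step, namely showing $\mu \in \overline{S}$ rather than just in some $\overline{S_j}$. It relies essentially on the full content of Lemma \ref{int:dim}, not just the dimension bound but the additional assertion that a $k$-face in the intersection of two distinct closed maximal relevant subcomplexes must be maximal in one of them; the role of the dimension assumption $\dim(\sigma) \geq k-1$ is precisely to guarantee that $\sigma \cup \{v\}$ reaches the critical dimension $k$ where that lemma applies.
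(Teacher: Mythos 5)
Your proposal is correct and follows essentially the same route as the paper's own proof: both use the fact that $\dim(\sigma\cup\{v\})\ge k$ together with Lemma \ref{int:dim} (and the partition of faces of dimension at least $k+1$ among the $\overline{S_i}$) to conclude that every face of $\fcc$ properly containing $\sigma\cup\{v\}$ lies in $\overline{S}$, and then transfer the free-face property from $\lk_{\overline{S}}(v)$ back to $\fcc$. Your write-up merely makes explicit the contradiction argument that the paper compresses into the sentence ``Thus by Lemma \ref{int:dim}, all higher dimensional faces of $\fcc$ containing $\sigma\cup\{v\}$ are contained in $\overline{S}$.''
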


\begin{proof}[Proof of Lemma \ref{liftcollapse}]
Since $\dim(\sigma)\geq k-1$, we have that $\dim(\sigma \cup \{v\} )\geq k$. And $\sigma \cup \{v\}$ is contained in $\tau \cup \{v\}$, so it is not maximal in $S$. Thus by Lemma \ref{int:dim}, all higher dimensional faces of $\fcc$ containing $\sigma \cup \{v\}$ are contained in $\overline{S}$, and are not in any other maximal relevant $(k+1)$-subcomplex. Hence\\
\begin{align*}
\{\eta\in \fcc:\sigma \cup \{ v \} \subseteq \eta \cup \{ v\} \}\subseteq \lk_{\overline{S}}(v).\\
\end{align*} 
But $\sigma$ is a free face of $\tau$ in $\lk_{\overline{S}}(v)$, so any such $\eta$ must also be contained in $\tau$. Therefore $\tau \cup \{v\}$ is maximal in $\fcc$, and $\sigma \cup \{ v \}$ is a free face of $\tau \cup \{v \}$ in $\fcc$.  
\end{proof}

Lemma \ref{lk:nicely} then follows as an immediate corollary of Lemma \ref{liftcollapse}. 

\begin{lem} \label{lk:nicely}
Let $\fcc$ be a clique complex, with $S \subseteq \fcc$ a maximal relevant $(k+1)$-subcomplex. Let $v \in S$ be a vertex such that $\lk_{\overline{S}}(v)$ is $k$-collapsible. Then $\fcc$ collapses to a complex in which $v$ is not contained in any $(k+1)$-dimensional faces of $\overline{S}$.
\end{lem}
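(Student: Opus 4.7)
The plan is to take the given $k$-collapsing sequence of $\lk_{\overline{S}}(v)$ and lift it, one elementary collapse at a time, to a sequence of elementary collapses of $\fcc$ via Lemma \ref{liftcollapse}, after which no $(k+1)$-face of $\overline{S}$ will contain $v$.

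Since $\lk_{\overline{S}}(v)$ is $k$-collapsible, there is a sequence of elementary collapses $[\sigma_1,\tau_1],\ldots,[\sigma_N,\tau_N]$ of $\lk_{\overline{S}}(v)$ that removes every face of dimension at least $k$. Only collapses with $\dim(\sigma_i)\geq k-1$ can touch such faces, so I may assume this bound holds for each $i$. I proceed by induction on $i$: suppose the first $i-1$ lifts $[\sigma_j\cup\{v\},\tau_j\cup\{v\}]$ have been carried out on $\fcc$, yielding a subcomplex $\fcc_{i-1}$. The faces removed from $\fcc$ so far are exactly those of the form $\eta\cup\{v\}$ with $\sigma_j\subseteq\eta\subseteq\tau_j$ for some $j<i$, and these correspond bijectively under the link-at-$v$ operation with the faces removed from $\lk_{\overline{S}}(v)$ in its first $i-1$ collapses. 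Consequently the link of $v$ in $\fcc_{i-1}\cap\overline{S}$ agrees with $\lk_{\overline{S}}(v)$ after those $i-1$ collapses, and in it $\sigma_i$ remains a free face of $\tau_i$.

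At this step the proof of Lemma \ref{liftcollapse} goes through verbatim for $\fcc_{i-1}$: Lemma \ref{int:dim} applied in the original $\fcc$ still guarantees that every face of $\fcc_{i-1}$ of dimension $\geq k+1$ containing $\sigma_i\cup\{v\}$ lies in $\overline{S}$, and the free-face property in the current link then forces any such face to be a subface of $\tau_i\cup\{v\}$. Hence $\sigma_i\cup\{v\}$ is a free face of $\tau_i\cup\{v\}$ in $\fcc_{i-1}$, and the $i$-th lifted elementary collapse may be performed, completing the induction. After all $N$ lifts, the link of $v$ inside $\overline{S}$ has dimension at most $k-1$, so $v$ is contained in no $(k+1)$-face of $\overline{S}$, which is exactly the conclusion. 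The main point to check, and the only real obstacle, is the inductive bookkeeping: that the hypotheses underlying Lemma \ref{liftcollapse} (the free-face relation in the current link and the containment of higher-dimensional neighbors inside $\overline{S}$) persist through each step. This is immediate because collapses only remove faces, so they can neither alter $\overline{S}$'s role as supplied by Lemma \ref{int:dim} nor disturb the sequence of free-face relations that was furnished at the outset in $\lk_{\overline{S}}(v)$.
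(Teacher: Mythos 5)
Your proof is correct and is exactly the argument the paper leaves implicit when it states that Lemma \ref{lk:nicely} ``follows as an immediate corollary of Lemma \ref{liftcollapse}'': factor the $k$-collapse of $\lk_{\overline{S}}(v)$ into elementary collapses with $\dim(\sigma_i)\ge k-1$ and lift them one at a time, checking that the hypotheses of Lemma \ref{liftcollapse} persist since collapses only remove faces. Your inductive bookkeeping supplies the details the paper omits, but the route is the same.
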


\bigskip

We are now ready to prove Theorem \ref{thm:colla}. We begin by applying Lemma \ref{lk:nicely} in the specific context where $v$ is a vertex of sufficiently small degree. Upon removing all high-dimensional faces above $v$, we will need to find the new maximal relevant $(k+1)$-subcomplexes of the remaining complex, and also narrow our attention to subcomplexes of $\fcc$ which are still clique complexes. When $k\ge 3$, for instance, we are removing faces without changing the 1-skeleton, so $\fcc$ itself will no longer be a clique complex. 

\bigskip

\begin{proof}[Proof of Theorem \ref{thm:colla}]

We proceed by induction on $k$, and for a fixed $k$ we will further use induction on a uniform upper bound $l\ge|\vsupp(S)|$ for every maximal relevant $(k+1)$-subcomplex $S\subseteq \fcc$.\\

{\bf Base Case for $k$:} 
$k=0$. Let $\fcc$ be a finite clique complex which satisfies the conditions of the theorem for $k=0$. By definition, a strongly connected, pure 1-dimensional complex is merely a connected graph with at least one edge. Thus the degree condition of the theorem requires that every nontrivial connected subgraph contains a vertex of degree 1. This is equivalent to requiring that every nontrivial connected subgraph is acyclic, and so $\fcc$ is a forest. Any vertex of degree 1 can be collapsed along with its incident edge, and thus it is straightforward that a forest collapses to a set of points.\\

{\bf Inductive Step for $k$:} 
Now fix $k\ge1$, suppose that the theorem holds for $k-1$, and let $\fcc$ be a finite clique complex such that every relevant $(k+1)$-subcomplex $S\subseteq \fcc$ has at least one vertex $v$ with $\deg_S(v) \le 2k+1$.\\

{\bf Base Case for $l$:} 
$l=k+2$. Note that $k+2$ is the least number of vertices required to support $(k+1)$-dimensional faces. Suppose $|\vsupp(S)|\le k+2$, and thus equal to $k+2$, for every maximal relevant $(k+1)$-subcomplex $S\subseteq\fcc$. Let $S$ be a fixed such subcomplex, with $\vsupp(S)=\{v_1,\ldots, v_{k+2}\}$. Then $S$ consists of a single $(k+1)$-face which contains all of the vertices, $\sigma=\{v_1,\ldots,v_{k+2}\}$. So $\overline{S}=S$, and since $S$ is maximal, $\sigma$ is also a maximal face in $\fcc$. 

By Lemma \ref{int:dim}, the $k$-faces $\sigma_i=\sigma-\{v_i\}$ are not contained in $(k+1)$-faces in any other maximal relevant subcomplexes, and are thus all free faces of $\sigma$ in $\fcc$. So, without loss of generality, we perform the collapse $[\sigma_1,\sigma]$. This reduces $S$ to a $k$-dimensional subcomplex, and, again by Lemma \ref{int:dim}, is independent of collapses performed in any other maximal relevant $(k+1)$-subcomplex. Only one such collapse is required for each of the finitely many such subcomplexes. Therefore $\fcc$ is $(k+1)$-collapsible.\\
 
 {\bf Inductive Step for $l$:} 
Fix $l \ge k+3$. Assume that any finite clique complex is $(k+1)$-collapsible if $|\vsupp(S)|\le l-1$ for each of its maximal relevant $(k+1)$-subcomplexes. Then suppose $\fcc$ is such that each of its maximal relevant $(k+1)$-subcomplexes is supported on at most $l$ vertices, and let $S$ be a maximal relevant $(k+1)$-subcomplex on exactly $l$ vertices.

By assumption we have some $v \in S$ such that $\deg_{S}(v) \leq 2k+1$. So $\lk_{\overline{S}}(v)$ is a clique complex with at most $2k+1$ vertices, and thus its 1-skeleton is isomorphic either to the complete graph $K_{2k+1}$, or to a proper subgraph of $K_{2k+1}$. 

{\bf Case 1.} Suppose the 1-skeleton of $\lk_{\overline{S}}(v)$ is a proper subgraph of $K_{2k+1}$. Since $K_{2k+1}$ has uniform vertex degree $2k$, every induced subgraph, and hence every subcomplex of $\lk_{\overline{S}}(v)$, contains a vertex of degree at most $2k-1$. Hence $\lk_{\overline{S}}(v)$ satisfies the hypothesis of the theorem for $k-1$. By induction on $k$, $\lk_{\overline{S}}(v)$ is $k$-collapsible.

{\bf Case 2.} Now suppose the $1$-skeleton of $\lk_{\overline{S}}(v)$ is the complete graph $K_{2k+1}$. This does not satisfy the degree condition, but $\fcc$ is a clique complex, so $\lk_{\overline{S}}(v)$ is a $(2k)$-dimensional simplex. Label the vertices in the link $x,v_1,\ldots,v_{2k}$. One may consider the $(2k)$-dimensional simplex to be a cone over the $(2k-1)$-dimensional simplex with cone point $x$. So for a subset $\sigma\subseteq\{v_1,\ldots,v_{2k}\}$, collapses of the form $[\sigma,\sigma \cup \{ x \}]$ are sufficient to collapse $\lk_{\overline{S}}(v)$ to a complex of dimension at most $k-1$, and thus it is $k$-collapsible. 

We see that in both cases $\lk_{\overline{S}}(v)$ is $k$-collapsible, and so by Lemma \ref{lk:nicely}, $\fcc$ collapses to a complex in which all faces in $\st_{\overline{S}}(v)$ have dimension at most $k$. We then narrow our attention to $S'=\overline{S}-\st_{\overline{S}}(v)$. Since faces were only removed from $\st_{\overline{S}}(v)$, $S'$ is a finite clique complex with $\vsupp(S')=\vsupp(S)-\{v\}$. The $(k+1)$-skeleton of $S'$ may not be strongly connected, but all remaining high-dimensional faces from $\overline{S}$ are now partitioned by the maximal relevant $(k+1)$-subcomplexes of $S'$. And $|\vsupp(S')|\le l-1$, so all of it's maximal relevant $(k+1)$-subcomplexes are supported on at most $l-1$ vertices. All of the relevant $(k+1)$-subcomplexes of $S'$ are also trivially relevant $(k+1)$-subcomplexes of the original complex $\fcc$, and so by assumption they satisfy the degree condition. Thus $S'$ is a finite clique complex satisfying the degree condition, and by the inductive assumption on $l$, $S'$ is $(k+1)$-collapsible.

We can then apply the same process to each of the finitely many maximal relevant $(k+1)$-subcomplexes of $\fcc$, and thus $\fcc$ is $(k+1)$-collapsible. Finally, by induction, the theorem holds for any fixed $k\ge 0$.
\end{proof}

\begin{lem} \label{lem:coll_prob}
Fix $k \geq 0$ and $\epsilon > 0$, and let $$p \leq n^{-1/(k+1)-\epsilon}.$$ If $\fcc \dist X(n,p)$, then with high probability, every strongly connected, pure $(k+1)$-dimensional subcomplex of $\fcc$ has a vertex of degree at most $2k+1$.
\end{lem}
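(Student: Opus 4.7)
The plan is to apply the first moment method. Let $X = X(\fcc)$ count the ``bad'' subcomplexes: strongly connected, pure $(k+1)$-dimensional subcomplexes $S \subseteq \fcc$ in which every vertex $u$ satisfies $\deg_S(u) \geq 2k+2$. Since $\prob(X \geq 1) \leq \E[X]$, it is enough to show $\E[X] \to 0$ as $n \to \infty$. For each candidate bad complex $S$, $\prob(S \subseteq \fcc) = p^{e(S^1)}$, where $e(S^1)$ denotes the edge count of the $1$-skeleton.

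Two structural bounds drive the enumeration. First, strong connectivity of a pure $(k+1)$-dimensional complex yields an ordering of its facets $\sigma_1, \ldots, \sigma_f$ such that each $\sigma_i$ (for $i \geq 2$) shares a $k$-face with some earlier $\sigma_j$; since $|\sigma_1| = k+2$ and each subsequent facet introduces at most one new vertex, $v := |\vsupp(S)| \leq k+1+f$. Second, the minimum-degree hypothesis forces
$$2\,e(S^1) \;=\; \sum_{u \in \vsupp(S)} \deg_S(u) \;\geq\; (2k+2)\,v,$$
so $\prob(S \subseteq \fcc) \leq p^{(k+1)v}$. I would enumerate bad $S$ using the growing process: for given $v$ and $f$, pick $V \in \binom{[n]}{v}$ and $\sigma_1 \in \binom{V}{k+2}$, and for each of the subsequent $f-1$ facets specify a parent facet ($\leq f$ choices), a $k$-face of the parent ($k+2$ choices), and the remaining vertex of the new facet (at most $v$ choices, since it must lie in $V$). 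This overcounts each complex, yielding
$$\E[X] \;\leq\; \sum_{v,\,f}\, \binom{n}{v}\, v^{k+2}\, \bigl(f(k+2)v\bigr)^{f-1}\, p^{(k+1)v}.$$
Substituting $p = n^{-1/(k+1)-\epsilon}$ gives $\binom{n}{v}\, p^{(k+1)v} \leq n^{-\epsilon(k+1)v}$, providing exponential decay in $v$.

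The main obstacle is controlling the sum over $f$ for each fixed $v$: the naive structure count $(f(k+2)v)^{f-1}$ grows super-exponentially in $f$, whereas the decay in $v$ alone cannot absorb it when $f \gg v$. To close the gap I would refine the edge bookkeeping through the growing process. Of the $f-1$ non-initial facets, exactly $a = v - k - 2$ are ``new-vertex'' facets, each contributing precisely $k+1$ new edges from the added vertex to the anchor $k$-face; the remaining $b = f-1-a$ ``redundant'' facets reuse existing vertices and must jointly contribute at least $\binom{k+2}{2}$ additional edges in order to meet the global bound $e(S^1) \geq (k+1)v$. Attaching the probability $p^{k+1}$ to each new-vertex step turns its combined (count, probability) contribution into $f(k+2) \cdot n \cdot p^{k+1} = f(k+2)\, n^{-\epsilon(k+1)}$, which shrinks with $n$. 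Splitting the sum by the step-type and handling the range $v \geq 2k+3$ (the minimum vertex count compatible with the degree hypothesis) separately for small and for large $v$---in the latter case using that $(k+2)$-clique counts in $G(n,p)$ concentrate polynomially in $n$ when $p = n^{-1/(k+1)-\epsilon}$---should then give the convergent bound $\E[X] \to 0$.
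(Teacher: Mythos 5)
Your first-moment setup, the degree-sum bound $2e(S^1)\ge(2k+2)\abs{\vsupp(S)}$, and the resulting estimate $\binom{n}{v}p^{(k+1)v}\le n^{-\epsilon(k+1)v}$ are exactly the right ingredients, and they match the computation in the paper. But the proposal does not close, and you have in fact named the gap yourself: the sum over the number of facets $f$ for fixed $v$. A redundant facet can reuse an existing vertex and contribute \emph{zero} new edges, so it multiplies your structure count by roughly $f(k+2)v$ while contributing nothing to the probability; your observation that the redundant facets must jointly supply $\binom{k+2}{2}$ extra edges bounds their total edge contribution, not their number, which can be as large as $\binom{v}{k+2}$. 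Relatedly, by counting subcomplexes rather than $1$-skeletons you inflate $\E[X]$ by the number of facet-families realizing a given graph, so $\E[X]$ can fail to tend to $0$ even when $\P(X\ge1)$ does. Finally, even after discarding redundant facets, the enumeration of the $v-k-2$ new-vertex steps contributes a factor of order $v^{cv}$, which overwhelms $n^{-\epsilon(k+1)v}$ once $v$ grows polynomially in $n$; the closing appeal to concentration of $(k+2)$-clique counts does not address this.

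The missing idea is a truncation (this is Lemma 5.1 of \cite{clique}, which the paper's proof invokes): any strongly connected, pure $(k+1)$-dimensional subcomplex on many vertices contains, by stopping the facet-growing process after its first $N+1$ new-vertex steps, a strongly connected pure $(k+1)$-dimensional subcomplex on \emph{exactly} $N+k+3$ vertices; such a subcomplex has at least $(k+1)(N+k+3)-\binom{k+2}{2}$ edges, and for $N>1/\epsilon$ a first-moment bound over this single bounded size shows none exist with high probability. This confines all bad witnesses to at most $N+k+2$ vertices, i.e.\ to finitely many graph isomorphism types, after which the minimum-degree union bound $n^{\ell}p^{(k+1)\ell}\to0$ finishes the proof with no sum over $f$ or over unbounded $v$ at all. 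If you want a self-contained argument, you should prove this bounded-support statement first and then run your union bound only over graphs on a bounded number of vertices with minimum degree at least $2k+2$.
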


\begin{proof}
Lemma \ref{lem:coll_prob} appears as a key argument in the proof of Theorem 3.6 in ~\cite{clique} to show vanishing homology. We briefly summarize that proof here, for completeness. Let $N$ be any integer such that $N > 1 / \epsilon$. As observed in Lemma 5.1 of \cite{clique}, with high probability every strongly connected, pure $(k+1)$-dimensional subcomplex of $\fcc$ is supported on at most $N+k+2$ vertices.

Fix any graph $H$ on $\ell$ vertices with $\ell \le N+k+2$, and with minimum degree at least $2k+2$. Applying a union bound, an upper bound on the probability of $H$ being an induced subgraph of our random graph is
$$ {n \choose \ell} \ell! \, p^{(2k+2) \ell/2} \le n^\ell p^{(k+1) \ell}.$$
By the assumption on $p$, this function tends to zero as $n \to \infty$. As there are only finitely many isomorphism types of graphs on at most $N+k+2$ vertices, we can apply a union bound over all of them.
\end{proof}

\begin{pfofthm}{\ref{thm:collapse}}
By Lemma \ref{lem:coll_prob}, $\fcc\dist X(n,p)$ satisfies the degree condition for strongly connected, pure $(k+1)$-dimensional subcomplexes with high probability, and therefore Theorem \ref{thm:collapse} follows from Theorem \ref{thm:colla}.
\end{pfofthm}
\vskip 1cm

\section{The Bouquet of Spheres Conjecture}\label{kspheres}

There is a great deal of inherent interest in collapsibility thresholds in random models, however the primary motivation for this work is to leverage $(k+1)$-collapsibility in $X(n,p)$ as a step towards Kahle's ``Bouquet of Spheres" conjecture. Specifically, Kahle conjectured that if $\fcc\dist X(n,p)$ for $p=n^{-\alpha}$ with $1/(k+1)<\alpha <1/k$ for a fixed $k \ge 3$, then $\fcc$ is homotopy equivalent to a bouquet of $k$-spheres with high probability. Since the dimension of $\fcc$ is either $2k$ or $2k+1$ in this range, this would also imply that all the nontrivial homology is in middle dimension, and it would provide a probabilistic explanation for the overwhelming prevalence of bouquets of spheres in the study of combinatorial topology.

For $k=2$, it is known that $\fcc$ is not homotopy equivalent to a bouquet of 2-spheres, as Babson ~\cite{babson}, and independently Costa, Farber, and Horak ~\cite{CFH15}, showed that with high probability $\pi_1(\fcc)$ is a nontrivial hyperbolic group for $1/3 < \alpha < 1/2$, and thus $\fcc$ is not simply connected. For $k=1$, however, a connected 1-dimensional complex with $H_{1}(\fcc;\Z)\neq 0$ is known to be homotopy equivalent to a bouquet of 1-spheres. In their classic paper, Erd\H{o}s and R\'{e}nyi showed that the threshold for connectivity of a graph sampled from $G(n,p)$ is only $p=\log n/n$~\cite{ER}, so $\fcc$ is connected with high probability for $p\ge n^{-1/2+\epsilon}$. And by Corallary 3.7 in ~\cite{clique}, $H_1(\fcc, \Z)\neq 0$ in this range. Therefore, 2-collapsibility for $1/3 < \alpha < 1/2$ implies that $\fcc$ is indeed homotopy equivalent to a bouquet of 1-spheres with high probability. 

For $k\ge3$, the question remains open. However, collapsibility is a significant step in this direction. A standard method for proving that a topological space is homotopy equivalent to a bouquet of $k$-spheres is by showing that it is both $k$-dimensional and topologically $(k-1)$-connected, i.e. $\pi_j(\fcc)=0$ for $j\leq k-1$. By the Hurewicz Theorem, instead of showing that $\pi_j(\fcc)=0$, it is equivalent to show that $\fcc$ is simply connected and $H_j(\fcc; \Z)=0$ for $2\le j\le k-1$. Through a series of theorems in ~\cite{clique} and ~\cite{kahlesharp}, Kahle was able to prove that for $p$ in the appropriate range, $\fcc\dist X(n,p)$ is at least \emph{rationally} homotopy equivalent to a bouquet of $k$-spheres with high probability (Corollary 1.3 in ~\cite{kahlesharp}). Among these results, he showed that $\fcc$ is simply connected if $\alpha < 1/3$. Therefore, since Theorem \ref{thm:collapse} proves that $\fcc$ is homotopy equivalent to a $k$-dimensional complex with high probability, to prove the conjecture in full it only remains to show that if $p\gg n^{-1/(k+1)}$ for fixed $k\ge 3$, then $H_k(\fcc;\Z)=0$.
\\

\section{Acknowledgements} 
The author thanks Chris Fowler, Chris Hoffman, and Matt Kahle for many helpful discussions, and also David Sivakoff for catching a technical error in an early draft.
\\

\bibliographystyle{abbrv}
\bibliography{collapsebib}

\end{document}